\newtheorem{thm}{Theorem}[section]
\newtheorem{lemma}{Lemma}[section]
\newtheorem{proposition}{Proposition}[section]
\def \R{{\Bbb R}}
\newcommand{\N}{{\mathbb N}}
\numberwithin{equation}{section}
\begin{document}

\title[Critical ZK equation]
{
Generalized ZK equation posed on  a half-strip
}

\author[
N.~A. Larkin]
{
	N.~A. Larkin \\
	Departamento de Matem\'atica,\\
	Universidade Estadual de Maringá\\
	87020-900, Maringá, Parana, Brazil}
\

\thanks{\it {Mathematics Subject Classification 2010: 35G61, 35Q53.} }

\thanks
{email:nlarkine@uem.br}

\keywords
{ZK equation, stabilization}

\begin{abstract}
An initial-boundary value problem for the  generalized 2D Zakharov-Kuznetsov equation posed on the right half-strip  is considered.
Existence, uniqueness and the  exponential decay rate of global regular solutions for small initial data are established.
\end{abstract}

\maketitle

\section{Introduction}\label{introduction}

We are concerned with an initial-boundary value problem (IBVP) 
  for the critical generalized Zakharov-Kuznetsov (ZK) equation  posed
  on the right half-strip
\begin{equation}
u_t+u^2u_x +u_{xxx}+u_{xyy}=0\label{zk}
\end{equation}
which is a two-dimensional analog of the   generalized Korteweg-de Vries (KdV) equation
\begin{equation}\label{kdv}
u_t+u^ku_x+u_{xxx}=0
\end{equation}
with  plasma physics applications \cite{zk} that has been intensively studied last years \cite{doronin1,familark,jeffrey,kaku}.                                  

Equations \eqref{zk} and \eqref{kdv} are typical examples
of so-called
dispersive equations attracting considerable attention
of both pure and applied mathematicians. The KdV
equation is  more studied in this context.
The theory of  initial-value problems
(IVP henceforth)
for \eqref{kdv} is considerably advanced today
\cite{tao,kato}.

 Although dispersive equations were deduced for the whole real line, necessity to calculate numerically the Cauchy problem approximating the real line either by finite or semi-finite intervals implies to study initial-boundary value problems posed on bounded and unbounded intervals \cite{doronin1,faminski2,familark,larluc2,temam,temam2}.
What concerns (1.2) with $k>1, \;l=1$, called generalized KdV equations, the Cauchy problem  was studied in \cite{ martel,merle}, where it has been established  that for $k=4$ (the critical case)  the problem is well-posed for small  initial data, whereas for arbitrary initial data solutions may blow-up in a finite time. The generalized Korteweg-de Vries equation was  studied for understanding the interaction between the dispersive term and the nonlinearity in the context of the theory of nonlinear dispersive evolution equations \cite{jeffrey,kaku,lar6}. \\
 Recently, due to physics and numerics needs, publications on initial-boundary value
problems in both bounded and unbounded domains for dispersive equations  have been appeared
\cite{larluc2,pastor,pastor2}. In
particular, it has been discovered that the KdV equation posed on a
bounded interval possesses an implicit internal dissipation. This allowed
to prove the exponential decay rate of small solutions for
\eqref{kdv} with $k=1$  posed on bounded intervals without adding any
artificial damping term \cite{doronin1}. Similar results were proved
for a wide class of dispersive equations of any odd order with one
space variable \cite{familark,larluc2}.

The interest on dispersive equations became to extend their study for 
multi-dimensional models such as Kadomtsev-Petviashvili (KP)
and ZK equations. We call (1.1) a critical ZK equation by analogy with the critical KdV equation (1.2) for $k=4.$ It means that we did not be able to prove the existence and uniqueness of global regular solutions without smallness restrictions for initial data similarly to the critical case for the KdV equation \cite{larluc2,martel,merle}.
As far as the ZK equation is concerned,
the results on both IVP and IBVP can be found in
\cite{faminski2,faminski3,pastor,pastor2,marcia}.
We must note that solvability of initial-boundary value problems in classes of global regular solutions for the regular case of the 2D ZK equation\; ($uu_x$)\; has been established in \cite{doronin1,faminski3, lar2,lar5,larkintronco,marcia,temam,temam2} for arbitrary smooth initial data. On the other hand, for the 3D ZK equation, the convective term\; $uu_x,$  which is regular for the 2D ZK equation, corresponds to a critical case. It means that to prove the existence and uniqueness of global regular solutions one must put restrictions of small initial data \cite{lar3,lar4,larpad}.

The main goal of our work is to prove  for small initial data the existence and uniqueness
of global-in-time regular solutions for \eqref{zk} posed  on the right half-strip
 and the exponential decay rate of
these solutions.

The paper is outlined as follows: Section I is the Introduction. Section 2 contains  formulation
of the problem and auxiliaries. In Section \ref{existence}, Galerkin`s approximations
are  used to prove the existence and uniqueness of regular solutions. In Section 4,  decay of these solutions   is established.

\section{Problem and preliminaries}\label{problem}

Let $(x,y)\equiv\;(x_1,x_2)\in\Omega$ and $\Omega$ \;be a domain in $\R^2$.
We use the usual notations of Sobolev spaces $W^{k,p}$, $L^p$ and $H^k$   and the following notations for the norms \cite{Adams}:

$$\| f \|_{L^p(\Omega)}^p = \int_{\Omega} | f  |^p\, d\Omega,\;\;
\| f \|_{W^{k,p}(\Omega)} = \sum_{0 \leq | \alpha| \leq k} \|D^\alpha f \|_{L^p(\Omega)},\;p\in(1,+\infty).$$
$$ \|f\|_{L^{\infty}(\Omega)}=ess\; \sup_{\Omega}|f(x,y)|;\;\;W^{k,2}(D)=H^k(D).$$

Let $B$ be a positive number. Define
\begin{align*}
&D=\{(x,y)\in\mathbb{R}^2: \ x>0,\ y\in(0,B) \},\ \ \ Q=D\times \R^+;\\&
\gamma=\partial D \; \text{is a boundary of } \;D.
\end{align*}

Consider the following IBVP:
\begin{align}
Lu&\equiv u_t+u^2u_x+u_{xxx}+u_{xyy}=0\ \ \text{in}\;\; Q;
\label{2.1}
\\
&u_{\gamma\times t} =0,\; t>0;
\label{2.2}
\\
&u(x,y,0)=u_0(x,y),\ \ (x,y)\in D,
\label{2.4}
\end{align}
where $u_0:D\to\mathbb{R}$ is a given function.

Hereafter subscripts $u_x,\ u_{xy},$ etc. denote the partial derivatives,
as well as $\partial_x$ or $\partial_{xy}^2$ when it is convenient.
Operators $\nabla$ and $\Delta$ are the gradient and Laplacian acting over $D.$
By $(\cdot,\cdot)$ and $\|\cdot\|$ we denote the inner product and the norm in $L^2(D),$
and $\|\cdot\|_{H^k(D)}$ stands for the norm in $L^2$-based Sobolev spaces.

We will need the following result \cite{lady2}.
\begin{lemma}\label{lemma1}
Let $u\in H^1(D)$ and $\gamma$ be the boundary of $D.$

If $u|_{\gamma}=0,$ then
\begin{equation}\label{2.5}
\|u\|_{L^q(D)}\le \beta\|\nabla u\|^{\theta}\|u\|^{1-\theta}.
\end{equation}
We will use frequently the following inequaliies:
$$\|u\|_{L^4(D)}\leq 2^{1/2}\|\nabla u\|^{1/2}\|u\|^{1/2},\;\;\|u\|_{L^8(D)}\leq 4^{3/4}\|\nabla u\|^{3/4}\|u\|^{1/4}.$$ 

If $u|_{\gamma}\ne0,$ then
\begin{equation}\label{2.6}
\|u\|_{L^q(D)}\le C_{D}\|u\|^{\theta}_{H^1(D)}\|u\|^{1-\theta},
\end{equation}
where $  \theta =2(\frac{1}{2}-\frac{1}{q}).$ 
\end{lemma}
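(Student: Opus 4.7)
The plan is to prove this classical Gagliardo–Nirenberg–Ladyzhenskaya type inequality by the standard calculus argument, treating the two boundary-condition cases separately.

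For the homogeneous Dirichlet case $u|_\gamma=0$, the key observation is that extending $u$ by zero outside $D$ yields a function $\tilde u\in H^1(\R^2)$ with $\|\tilde u\|_{L^q(\R^2)}=\|u\|_{L^q(D)}$, $\|\nabla\tilde u\|_{L^2(\R^2)}=\|\nabla u\|$, and $\|\tilde u\|_{L^2(\R^2)}=\|u\|$, so the general inequality follows from the classical Gagliardo–Nirenberg estimate on $\R^2$ with $\theta=1-2/q$. For the explicit $L^4$ constant, I would argue directly: for $u\in C_c^\infty(D)$, the fundamental theorem of calculus together with $u|_\gamma=0$ gives
$$u^2(x,y)\le 2\int_0^\infty |u\,u_x|(x',y)\,dx',\qquad u^2(x,y)\le 2\int_0^B |u\,u_y|(x,y')\,dy'.$$
Multiplying these pointwise, integrating over $D$, and using Cauchy–Schwarz yields $\|u\|_{L^4(D)}^4\le 4\|u\|^2\|u_x\|\|u_y\|\le 2\|u\|^2\|\nabla u\|^2$, which is bounded by the stated constant. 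The $L^8$ inequality then follows either by iterating (apply the $L^4$ bound to $u^2$, using that $u^2$ also vanishes on $\gamma$) or as the special case $\theta=3/4$ of the general estimate.

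For the inhomogeneous case $u|_\gamma\ne 0$, I would invoke a bounded linear extension operator $E:H^1(D)\to H^1(\R^2)$, whose existence is standard since $D$ has a Lipschitz boundary (concretely, one can reflect across $\{x=0\}$ to obtain a function on the full strip $\R\times(0,B)$ and then extend across $\{y=0\}$ and $\{y=B\}$ by even reflection followed by a smooth cutoff). Applying the classical $\R^2$ inequality to $Eu$ and restricting gives
$$\|u\|_{L^q(D)}\le \|Eu\|_{L^q(\R^2)}\le C\|Eu\|_{H^1(\R^2)}^{\theta}\|Eu\|_{L^2(\R^2)}^{1-\theta}\le C_D\|u\|_{H^1(D)}^{\theta}\|u\|^{1-\theta},$$
with $C_D$ absorbing the operator norm of $E$.

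The only real obstacle is the construction of $E$ near the two corners $(0,0)$ and $(0,B)$ of the half-strip; here, however, both corners are convex right angles, so the composition of reflections across the two orthogonal faces defines $E$ explicitly and with control on $\|E\|_{H^1\to H^1}$. Everything else reduces to a reference to the Ladyzhenskaya–type inequality on $\R^2$, and the specific constants $2^{1/2}$ and $4^{3/4}$ are non-sharp upper bounds that suffice for later applications.
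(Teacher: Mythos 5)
Your proposal is essentially correct, and it is in fact a proof where the paper offers none: the paper simply cites Ladyzhenskaya--Solonnikov--Uraltseva for this lemma, and your argument is the standard one behind that citation. The zero-extension to $\R^2$ plus the classical Gagliardo--Nirenberg estimate (with $\theta=1-\tfrac2q=2(\tfrac12-\tfrac1q)$) handles the Dirichlet case, your direct calculus derivation of the $L^4$ bound is correct (it even gives the sharper constant $2^{1/4}\le 2^{1/2}$), and the reflection-based extension operator across the right-angle corners of the half-strip settles the case $u|_{\gamma}\ne 0$ with $C_D$ absorbing $\|E\|_{H^1\to H^1}$.

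One detail deserves a correction: the parenthetical suggestion to get the $L^8$ bound by ``applying the $L^4$ bound to $u^2$'' does not work as stated, because it produces $\|\nabla(u^2)\|=2\|u\,\nabla u\|$, which is not controlled by $\|u\|$ and $\|\nabla u\|$ alone (H\"older gives $\|u\|_{L^4}\|\nabla u\|_{L^4}$ or $\|u\|_{L^\infty}\|\nabla u\|$, neither of which is available for $u\in H^1$). The correct elementary iteration is to bound $\int_D|u|^{2s}\le s^2\|u\|_{L^{2(s-1)}}^{2(s-1)}\|u_x\|\,\|u_y\|$ from the same pointwise inequalities, first with $s=3$ (giving $\|u\|_{L^6}\le 9^{1/6}\|\nabla u\|^{2/3}\|u\|^{1/3}$) and then with $s=4$; this yields $\|u\|_{L^8}\le 72^{1/8}\|\nabla u\|^{3/4}\|u\|^{1/4}$, comfortably below the stated non-sharp constant $4^{3/4}$. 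Your fallback of quoting the general estimate with $\theta=3/4$ is also fine, so the gap is easily repaired, but as written that one step would fail.
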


\begin{lemma} \label{steklov} Let $v \in H^1_0(0,B).$ Then
	\begin{equation}\label{Estek} 
	\|v_y\|^2\geq \frac {\pi^2}{B^2}\|v\|^2.
	\end{equation}
\end{lemma}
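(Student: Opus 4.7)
The plan is to prove this as the standard Steklov/Poincaré inequality, whose sharp constant $\pi^2/B^2$ is the first Dirichlet eigenvalue of $-\partial_y^2$ on $(0,B)$. The cleanest route is via Fourier sine series, which are naturally adapted to the boundary conditions built into $H^1_0(0,B)$.

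First I would reduce to the case $v \in C^\infty_c(0,B)$ by density in $H^1_0(0,B)$, so that all manipulations below are rigorous and the final inequality extends to the whole space by continuity of both sides in the $H^1$ norm. Next I would expand
\[
v(y)=\sum_{n=1}^{\infty} a_n \sin\!\Bigl(\tfrac{n\pi y}{B}\Bigr),
\]
noting that $\{\sin(n\pi y/B)\}_{n\ge 1}$ is a complete orthogonal system in $L^2(0,B)$ composed precisely of the Dirichlet eigenfunctions of $-\partial_y^2$, with eigenvalues $(n\pi/B)^2$. Parseval gives $\|v\|^2=\tfrac{B}{2}\sum_n a_n^2$, and termwise differentiation (valid because $v\in H^1_0$) yields $v_y(y)=\sum_n a_n\,\tfrac{n\pi}{B}\cos(n\pi y/B)$, hence
\[
\|v_y\|^2 \;=\; \frac{B}{2}\sum_{n=1}^{\infty}\Bigl(\tfrac{n\pi}{B}\Bigr)^{\!2} a_n^2
\;\ge\; \Bigl(\tfrac{\pi}{B}\Bigr)^{\!2}\,\frac{B}{2}\sum_{n=1}^{\infty} a_n^2
\;=\; \frac{\pi^2}{B^2}\,\|v\|^2,
\]
where the inequality is simply $n^2\ge 1$ applied term by term. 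Equality is attained along $v(y)=\sin(\pi y/B)$, confirming the constant is sharp.

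I do not anticipate a real obstacle here; the only point requiring minor care is justifying termwise differentiation of the Fourier series, which follows from the fact that for $v\in H^1_0(0,B)$ the odd $2B$-periodic extension lies in $H^1_{\mathrm{loc}}(\mathbb{R})$, so its sine coefficients and the cosine coefficients of $v_y$ are related by the factor $n\pi/B$ in the standard way. An equivalent variational formulation—$\pi^2/B^2=\inf_{v\in H^1_0\setminus\{0\}}\|v_y\|^2/\|v\|^2$ realized by the first eigenfunction—could be invoked instead, but the Fourier argument is self-contained and fits the one-dimensional setting of the lemma without appealing to additional spectral theory.
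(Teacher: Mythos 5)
Your proof is correct and follows essentially the same route as the paper: the paper invokes the classical Steklov inequality on $(0,\pi)$, itself proved by Fourier (sine) series, and then rescales to $(0,B)$, while you simply carry out the same Fourier-series argument directly on $(0,B)$ with the density and termwise-differentiation details made explicit.
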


\begin{proof} The proof is based on the Steklov inequality \cite{steklov}: let $v(t)\in H^1_0(0,\pi)$, then by the Fourier series $\int_0^{\pi}v_t^2(t)\,dt\geq\int_0^{\pi}v^2(t)\,dt.$
	Inequality \eqref{Estek} follows by a simple scaling.
\end{proof}

\begin{proposition}\label{prop1}
	Let for a.e. fixed $t$  $u(x,y,t)\in H^1(D)$ and $u_{xy}(x,y,t)\in L^2({D}).$ Then
	\begin{align}
		&\sup_{(x,y)\in{D}}u^2(x,y,t)\le 2\Big[ \|u\|^2_{H^1({D})}(t)+\|u_{xy}\|^2_{L^2({D})}(t)\Big]\notag\\&\leq 2\|u\|^2(t)_{H^2(D)}.
	\end{align}
\end{proposition}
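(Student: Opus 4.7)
The plan is to reduce the pointwise bound to an $H^1$-plus-mixed-derivative integral via the fundamental theorem of calculus, exploiting the Dirichlet boundary data $u|_{\gamma}=0$ imposed on $D$ (the half–strip has $\{x=0\}\cup\{y=0\}\cup\{y=B\}$ on its boundary, so $u$ vanishes there, and as an $H^1$ function on the half-strip $D$ it also decays in $x$ in the sense that $u^2(\xi,\eta)\to 0$ as $\xi\to\infty$). For smooth compactly supported $u$ this is transparent, and the general case follows by density once the inequality is established for the approximants.

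First, for $(x,y)\in D$, I would write
\begin{equation*}
u^{2}(x,y)=\int_{0}^{x}\!\!\int_{0}^{y}\partial_{\xi}\partial_{\eta}\bigl[u^{2}(\xi,\eta)\bigr]\,d\eta\,d\xi
=\int_{0}^{x}\!\!\int_{0}^{y}2\bigl(u_{\xi}u_{\eta}+u\,u_{\xi\eta}\bigr)(\xi,\eta)\,d\eta\,d\xi,
\end{equation*}
using $u(0,y)=u(x,0)=0$ to kill the three boundary contributions produced by the iterated integration. Next, Young's inequality $2|ab|\le a^{2}+b^{2}$ applied to each product yields
\begin{equation*}
u^{2}(x,y)\le\int_{0}^{x}\!\!\int_{0}^{y}\bigl(u_{\xi}^{2}+u_{\eta}^{2}+u^{2}+u_{\xi\eta}^{2}\bigr)\,d\eta\,d\xi
\le \|u\|_{H^{1}(D)}^{2}+\|u_{xy}\|_{L^{2}(D)}^{2},
\end{equation*}
since the integrand is nonnegative and the rectangle $[0,x]\times[0,y]$ is contained in $D$. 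Taking the supremum over $(x,y)\in D$ and inserting the harmless factor $2$ gives the first inequality in the proposition; the second is immediate from $\|u_{xy}\|_{L^{2}(D)}\le\|u\|_{H^{2}(D)}$ and $\|u\|_{H^{1}(D)}\le\|u\|_{H^{2}(D)}$.

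The only nontrivial point is the justification of the iterated FTC identity for $u\in H^{1}(D)$ with $u_{xy}\in L^{2}(D)$, since the proposition is stated without a smoothness hypothesis beyond this. I would handle this by density: approximate $u$ by functions $u_{n}\in C_{c}^{\infty}(\overline{D})$ vanishing on $\gamma$, for which the identity holds classically; pass to the limit in the quadratic estimate above; and use Fatou on the supremum (or, equivalently, exploit that the bound controls $u$ in $L^{\infty}(D)$, into which $H^{2}(D)$ embeds continuously by Sobolev embedding in two dimensions, so the limit is actually attained in $L^{\infty}$). This density/limit step is the one requiring care; the algebraic estimate itself is elementary.
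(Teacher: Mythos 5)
Your proof is correct and is essentially the paper's own argument: the paper carries out the same FTC-plus-Young computation in two successive steps (first in $y$, bounding $u^2(x,y)$ by $\rho^2(x)=\int_0^B(u^2+u_y^2)\,dy$, then in $x$, which brings in $u_x$ and $u_{xy}$), while you fuse them into the single identity $u^2(x,y)=\int_0^x\!\int_0^y\partial_\xi\partial_\eta\bigl(u^2\bigr)\,d\eta\,d\xi=\int_0^x\!\int_0^y 2\bigl(u_\xi u_\eta+u\,u_{\xi\eta}\bigr)\,d\eta\,d\xi$; both versions use only $u,\nabla u,u_{xy}\in L^2(D)$ together with the vanishing of $u$ on the relevant parts of $\gamma$ (a hypothesis the proposition's statement leaves implicit but which the paper's proof also uses). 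Your closing density/approximation remark supplies a justification the paper passes over silently, so no gap remains.
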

\begin{proof}
	For a fixed $x\in (0,L)$ and for any $y\in (0,B),$ it holds
	$$
	u^2(x,y,t)=\int_{0}^y\partial_su^2(x,s,t)\,ds\le \int_{0}^Bu^2(x,y,t)\,dy+\int_{0}^Bu_y^2(x,y,t)\,dy$$
	$$\equiv\rho^2(x,t).
	$$
	On the other hand,
	$$
	\sup_{(x,y)\in\mathcal{D}}u^2\le \sup_{x\in(0,L)}\rho^2(x)=\sup_{x\in (0,L)}\left|\int_0^x\partial_s\rho^2(s)\,ds\right|$$$$
	\le2\int_0^L\int_{0}^B\left(u^2+u_x^2+u_y^2+u_{xy}^2\right)\,dx\,dy \leq 2\|u\|^2_{H^2(D)}.
	$$
	The proof of Proposition 2.1 is complete.
\end{proof}

\section{Existence theorem}\label{existence}

\begin{thm}\label{theorem1}
Given  $u_0(x,y)$  such that $u_0|_{\gamma}=0$ \; and

\begin{align}&J(u_0)\equiv\int_D(1+x)^2\Big[u^2_0(x,y)+|\nabla u_0(x,y)|^2+|\Delta u_{0x}(x,y)|^2\notag\\&+ u^4_0(x,y)u^2_{0x}(x,y)\Big]dxdy<\infty,
	\end{align}

\begin{equation}
\|u_0\|<min(\frac{1}{8},\frac{\pi^2}{4B^2}),\;\;K(0)<\frac{\pi^2}{2B^2},
\end{equation}
where
\begin{align*}
	&K(t)\equiv 2^8\|(1+x)u_0\|^2\Big(\|3(1+x)u_0\|^2+2((1+x)^2,u^2_t)(t)\Big)\\&
	+2^9\|(1+x)u_0\|\Big(5\|(1+x)u_0\|^3+4\|(1+x)u_t\|^3(t)\Big)\Big[1\\&
	+2^8\|(1+x)u_0\|\Big(5\|(1+x)u_0\|^3+4\|(1+x)u_t\|^3(t)\Big)\Big],
\end{align*}

$$((1+x^2,u^2_t)(0)=((1+x),^2\{u_{0xxx}+u_{0xyy}+u^2_0u_{0x}\}^2).$$
Then  there exists a unique strong solution to
\eqref{2.1}-\eqref{2.4} such that
\begin{align*}
&u\in L^{\infty}(\R^+;H^2(D));\;\Delta u_x\in L^{\infty}(\R^+;L^2(D));\\
&u_t\in L^{\infty}(\R^+;L^2(D))\cap L^2(\R^+;H^1(D)).
\end{align*}

\end{thm}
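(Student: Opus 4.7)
The plan is to construct Galerkin approximations in a basis of $H^1_0(D)$ (for example Dirichlet eigenfunctions of $-\Delta$ on the truncated rectangle $(0,L)\times (0,B)$, with subsequent passage to $L\to\infty$) and to derive a tower of a priori estimates uniform in the Galerkin index and in the truncation parameter $L$. The critical cubic nonlinearity $u^2u_x$ is the essential obstruction, which is why only small data can be admitted and why the weighted multiplier $(1+x)$ must be used to extract an internal dissipation on the unbounded strip, mirroring the technique already familiar from the half-line KdV theory.

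The first estimate is the standard energy identity obtained by multiplying \eqref{2.1} by $u$: the convective term vanishes by $u|_\gamma=0$, while a nonnegative boundary flux $\tfrac12\int_0^B u_x^2(0,y,t)\,dy$ is produced at $x=0$, so that $\|u\|(t)\le\|u_0\|$ and a trace of $u_x$ at $x=0$ is in $L^2_t$. I would then multiply \eqref{2.1} by $2(1+x)u$; the dispersive terms yield $3\|\nabla u\|^2$ together with another positive boundary flux, while the convective term contributes $\tfrac12\int_D u^4\,dxdy$, bounded via Lemma \ref{lemma1} by $\|u\|^2\|\nabla u\|^2$ and thus absorbed provided $\|u_0\|<1/8$. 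The spurious lower-order piece $-\|u\|^2$ generated by the weight is dominated by $\|\nabla u\|^2$ through Lemma \ref{steklov} under the hypothesis $\|u_0\|<\pi^2/(4B^2)$. The outcome is a uniform bound on $\|(1+x)^{1/2}u\|(t)+\|\nabla u\|(t)$ together with $\nabla u\in L^2_t L^2_x$.

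At the second level I differentiate \eqref{2.1} in $t$ and test with $2(1+x)u_t$. The dispersive and weight-induced terms repeat the pattern of the first weighted estimate, but the nonlinear contribution becomes $\int_D(1+x)(2uu_xu_t+u^2u_{xt})u_t\,dxdy$; after integration by parts and repeated use of the $L^4$ and $L^8$ Gagliardo--Nirenberg bounds in Lemma \ref{lemma1}, this is controlled by a quantity proportional to $K(t)\|\nabla u_t\|^2$. The hypothesis $K(0)<\pi^2/(2B^2)$ permits absorption, and $\|u_t\|^2(0)$ is finite thanks to the term $u_0^4u_{0x}^2$ and the factor $(1+x)^2$ packaged into $J(u_0)$, since $u_t|_{t=0}=-u_{0xxx}-u_{0xyy}-u_0^2u_{0x}$. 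Having secured $u_t\in L^\infty(\R^+;L^2(D))\cap L^2(\R^+;H^1(D))$, I read \eqref{2.1} as the elliptic-in-$x$ relation $u_{xxx}+u_{xyy}=-u_t-u^2u_x$ to lift the solution to $u\in L^\infty(\R^+;H^2(D))$ with $\Delta u_x\in L^\infty(\R^+;L^2(D))$. Passage to the Galerkin and $L\to\infty$ limits is routine by weak-$*$ compactness and lower semicontinuity. Uniqueness follows by testing the difference equation for $w=u-v$ against $w$, the cubic remainder being handled by the $L^\infty$-bound of Proposition \ref{prop1} and a Gronwall argument.

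The main obstacle is the bootstrap needed for the $u_t$-estimate to close globally in time: the constant $K(t)$ itself involves $\|(1+x)u_t\|(t)$, so the smallness $K(0)<\pi^2/(2B^2)$ assumed at $t=0$ must be propagated to $K(t)<\pi^2/(2B^2)$ for all $t>0$. Without this continuation the absorption against the Steklov-corrected dissipation fails and no global regular solution can be extracted, exactly as in the critical $k=4$ KdV case; making this bootstrap rigorous is where the precise, somewhat technical form of the quantity $K(t)$ in the statement is indispensable, and it is the step on which the whole argument pivots.
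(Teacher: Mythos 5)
Your overall strategy (Galerkin approximation, the $L^2$ and $(1+x)$-weighted estimates, then a time-differentiated estimate whose nonlinear terms are packaged into $K(t)$ and absorbed via the Steklov inequality under the bootstrap $K(0)<\pi^2/(2B^2)\Rightarrow K(t)<\pi^2/(2B^2)$) is the same as the paper's, and you correctly identify the bootstrap as the pivot. But there is a genuine gap at the step where you pass from $u_t\in L^\infty(\R^+;L^2(D))$ to $u\in L^\infty(\R^+;H^2(D))$ and $\Delta u_x\in L^\infty(\R^+;L^2(D))$ by ``reading the equation as an elliptic-in-$x$ relation.'' This does not work as stated, for two reasons. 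First, to place the right-hand side $-u_t-u^2u_x$ in $L^2(D)$ you need $u^2u_x\in L^2$, and at that stage you only control $u\in H^1(D)$ and $u_x\in L^2(D)$; in two dimensions this gives $u\in L^q$ for finite $q$ but not $u\in L^\infty$, so $\|u^2u_x\|\le\|u\|^2_{L^\infty}\|u_x\|$ is not available. The sup bound of Proposition \ref{prop1} requires control of $u_{xy}$, i.e.\ exactly the second-order information you are trying to produce, so the argument is circular. Second, even granting $\Delta u_x\in L^2$, the operator $\partial_x\Delta$ is not elliptic: this quantity controls neither $u_{yy}$ nor $u_{xx}$ separately, and on the half-strip the trace of $u_x$ at $x=0$ is not prescribed, so no standard elliptic lift yields $u\in H^2(D)$. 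The paper closes precisely this hole with two further multiplier estimates that you omit: testing with $-2(1+x)u_{yy}$ (equivalently, multiplying the $j$-th Galerkin equation by $\lambda_j$) to bound $\|u_{xy}\|,\|u_{yy}\|$ and hence, via Proposition \ref{prop1}, the uniform sup bound $C_s$; and then testing with $-2(1+x)u_{xx}$ to bound $\|u_{xx}\|$, after which $\|\Delta u_x\|$ is read off from the equation. Any correct completion of your argument needs these (or equivalent) second-order estimates; they cannot be replaced by the asserted elliptic lift.

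Two smaller points. Your uniqueness sketch tests the difference $w=u_1-u_2$ against $w$ alone; the resulting nonlinear terms are of the form $\int w^2(u_1+u_2)\,\partial_x u_2$ and $\int u_1^2 w w_x$, which require a dissipative term in $\|\nabla w\|^2$ to absorb, and the plain multiplier produces only the trace $\int_0^B w_x^2(0,y,t)\,dy$; the paper uses $(1+x)w$, which generates $3\|w_x\|^2+\|w_y\|^2$ on the left and closes with Gronwall. Also, your Galerkin scheme (2D Dirichlet eigenfunctions on a truncated rectangle with a second limit $L\to\infty$) differs from the paper's, which expands only in the $y$-eigenfunctions and solves a regularized KdV system in $(x,t)$ on the half-line, thereby avoiding the question of which extra boundary conditions to impose at $x=L$ for the third-order operator and whether the estimates are uniform in $L$; if you keep your scheme, that issue must be addressed explicitly.
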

\begin{proof}
To prove this theorem, we will use the Faedo-Galerkin approximations. Let $w_j(y)$ be orthonormal in $L^2(D)$ eigenfunctions to the following Dirichlet Problem:
\begin{equation}
w_{jyy}+\lambda_jw_j=0, \;\;y\in (0,B); \;\; w_j(0)=w_j(B)=0;\;\;j\in {\N}.
\end{equation}
Define approximate solutions of (2.1)-(2.3) in the form:
\begin{equation}
u^N(x,y,t)=\sum_{j=1}^N g^N_j(x,t)w_j(y).
\end{equation}
Here $g^N_j(x,t)$ are solutions to the following Korteweg-de Vries system:
\begin{align}
&g^N_{jt}+ g^N_{jxxx}-\lambda_jg^N_{jx}+\int_0^B |u^N|^2u^N_x w_j(y) dy=0,\\
&g^N_j(0,t)=0;\; t>0,\\& g^N_j(x,0)=(u^N_{0},w_j),\;\;x\in \R^+,\;j=1,...,N,
\end{align}
where $u^N_{0}=\sum_{i=1}^N\alpha_{iN}w_i \; \text{and}\; \lim_{N\to\infty} J(u_0^N)= J(u_0).$

Since each regularized KdV equation from (3.5) is not critical, it is known \cite{lar6} that there exists a unique regular solution of (3.5)-(3.7) at least locally on time.

\noindent Our goal is to obtain global in $t$  a priori estimates for the $u^N$  independent of $t$ and $N,$ then to pass the limit as $N$ tends to $\infty$ getting a solution to (2.1)-(2.3).

{\bf Estimates of approximate solutions.}\\
{\bf Estimate I.}
Multiply (3.5) by $g^N_j$, sum up over $j=1,...,N$ and integrate over $\Omega \times (0,t)$ to obtain

\begin{align} &\| u^N \|^2(t) + \int_0^t \int_0^B (u_{x}^N)^2(0,y,\tau)\, dy \,  \, d\tau  \notag\\&= \|u_{0}^N \|^2\leq \|u_0\|^2,\;\;t>0.
\end{align}

{\bf Estimate II.}\label{2-nd estimate}
Write the inner product
$$2\left(Lu^N,(1+x)u^N\right)(t)=0,
$$
dropping the index $N$, in the form: 
\begin{align*}
\frac{d}{dt}\left((1+x),u^2\right)(t)
&+\int_{0}^Bu_x^2(0,y,t)\,dy
+3\|u_x\|^2(t)+\|u_y\|^2(t)\\
&=\frac12\int_{\mathcal{D}}u^4\,dx\,dy.
\end{align*}
Taking into account \eqref{2.5} and (3.8), we obtain
\begin{align*}
\frac12\int_{\mathcal{D}}u^4\,dx\,dy
&\le \frac12\|u\|^4_{L^4(\mathcal{D})}(t)
\le 2\|\nabla u\|^2(t)\|u_{0}\|^2(t).
\end{align*}
This implies
\begin{align}
\frac{d}{dt}&((1+x),u^2)(t)
+\frac12\|\nabla u\|^2(t)+\bigl(\frac12-2\|u_0\|^2\bigr)\|\nabla u\|^2(t)\notag\\&+2\|u_x\|^2(t)+\int_{0}^Bu_x^2(0,y,t)\,dy\le 0\end{align}

and consequently, due to (3.2),

\begin{align}\label{3.6}
\left((1+x),|u^N|^2\right)(t)+&\int_0^t\int_{0}^B|u^N_{x}|^2(0,y,\tau)\,dy\,d\tau+\frac{1}{2}\int_0^t\|\nabla u^N\|^2(\tau)\,d\tau
\notag\\&\le ((1+x),u_0^2), \;\;t>0.
\end{align}
Moreover, we can  rewrite (3.9) as
\begin{align}
&4\|u^N_x\|^2(t)+\|\nabla u^N\|^2(t)+\int_{0}^B|u^N_x|^2(0,y,t)\,dy\leq 4|((1+x)u^N,u^N_t)(t)|\notag\\ &\leq4\|(1+x)^{1/2}u^N\|(t)\|(1+x)^{1/2}u^N_t\|(t).
\end{align}	

{\bf Estimate III}\\

Write the inner product
$$2\left(Lu^N,(1+x)^2u^N\right)(t)=0,
$$
dropping the index $N$ and making use of (2.6) with respect the variable $y$, in the form: 
\begin{align*}
	&\frac{d}{dt}\left((1+x)^2,u^2\right)(t)
	+\int_{0}^Bu_x^2(0,y,t)\,dy
	+6\|(1+x)^{1/2}u_x\|^2(t)\\&+\|(1+x)^{1/2}u_y\|^2(t) +\frac{\pi^2}{B^2}\|(1+x)^{1/2}u\|^2(t)
	=-((1+x),u^4)(t)\\&\leq\|(1+x)^{1/2}u\|^4_{L^4(D)}(t)\notag\\&
\leq 4\|(1+x)^{1/2}u\|^2(t)\|(1+x)^{1/2}\nabla u+\frac{1}{2(1+x)^{1/2}}u\|^2(t)\notag\\&
\leq 4\|u_0\|^2\|(1+x)^{1/2}u\|^2(t)+4\|(1+x)^{1/2}u_0\|^2(t)\|(1+x)^{1/2}\nabla u\|^2(t).
\end{align*}

Making use of conditions of Theorem 3.1, we get
\begin{align*}
	&\frac{d}{dt}\left((1+x)^2,u^2\right)(t)
	+5\|(1+x)^{1/2}u_x\|^2(t)
	+\frac{1}{2}\|(1+x)^{1/2}\nabla u\|^2(t)\leq 0.
\end{align*}

This implies

\begin{align}
	&((1+x)^2,u^2)(t)+\int^t_{0}\{\|(1+x)^{1/2}u_x\|^2(\tau)
	+\frac{1}{2}\|(1+x)^{1/2}\nabla u\|^2(\tau)\}d\tau \notag\\
	&+((1+x)^2,u^2)(t)\leq ((1+x)^2,u^2)(0)\end{align}

and

\begin{align}
	&\|(1+x)^{1/2}u_x\|^2(t)
	+\frac{1}{2}\|(1+x)^{1/2}\nabla u\|^2(t)\notag\\& \leq 4\|(1+x)u_t\|(t)\|(1+x)u\|(t).
\end{align}

{\bf Estimate IV}\\
Dropping the index $N$, write the inner product
$$
2\left((1+x)^2u^N_t,\partial_t(Lu^N\right)(t)=0
$$
as
\begin{align}\label{3.13}
	\frac{d}{dt}
	&\left((1+x)^2,u_t^2\right)(t)+\int_{0}^Bu_{xt}^2(0,y,t)\,dy+6\|(1+x)^2u_{xt}\|^2(t)\notag\\&	+2\|(1+x)^2u_{yt}\|^2(t)\notag\\
	&=2\left((1+x)^2u^2u_t,u_{xt}\right)(t)+4((1+x)u^2,u_t^2)(t).
\end{align}

Making use of Lemmas 2.1, 2.2, (3.13) and taking into account the first inequality of (3.2), we estimate
\begin{align*}
	I_1
	&=2\left((1+x)^2u^2u_t,u_{xt}\right)(t)\\&\leq 2\|(1+x)^{1/2}u_{xt}\|(t)\|(1+x)^{1/2}u\|^2(t)_{L^8(D)}\|(1+x)^{1/2}u_t\|(t)_{L^4(D)}\\
	&\le \|(1+x)^{1/2}u_{xt}\|^2(t)+\frac{1}{2}\|(1+x)^{1/2}\nabla u_t\|^2\\&
	+2^9\|(1+x)^{1/2}u\|(t)\Big(\|u_0\|^3+\|(1+x)^{1/2}\nabla u\|^3(t)\Big)\Big[1\\&
	+2^8\|(1+x)^{1/2}u\|(t)\{\|u_0\|^3+\|(1+x)^{1/2}\nabla u\|^3(t)\}\|\Big](1+x)^{1/2}u_t\|^2(t)\\&
	\le \|(1+x)^{1/2}u_{xt}\|^2(t)+\frac{1}{2}\|(1+x)^{1/2}\nabla u_t\|^2\\&
	+2^9\|(1+x)u\|(t)\Big(\|u_0\|^3+2^3\|(1+x)u_t\|^{3/2}(t)\|(1+x)u\|^{3/2}(t)\Big)\Big[1\\&
	+2^8\|(1+x)u\|(t)\{\|u_0\|^3+2^3\||(1+x)u_t\|^{3/2}(t)\}\|(1+x)u_t\|^2(t)\\&
	\le \|(1+x)^{1/2}u_{xt}\|^2(t)+\frac{1}{2}\|(1+x)^{1/2}\nabla u_t\|^2\\&
	+2^9\|(1+x)u\|(t)\Big(5\|(1+x)u_0\|^3+4\|(1+x)u_t\|^3(t)\Big)\Big[1\\&
	+2^8\|(1+x)u\|(t)\{5\|(1+x)u_0\|^3+4\|(1+x)u_t\|^3(t)\Big]\|(1+x)u_t\|^2(t).
		\end{align*}
	
Similarly,
\begin{align*}
	I_2
	&=4((1+x)u^2,u_t^2)(t)\le 4\|(1+x)^{1/2}u\|^2(t)_{L^4(D)}\|u_t\|^2(t)_{L^4(D)}\\
	&\leq 2^4\|(1+x)^{1/2}u\|(t)\|\nabla ((1+x)^{1/2} u)\|(t)\|u_t\|(t)\|\nabla u_t\|(t)\\
	&\leq\frac{1}{2}\|\nabla u_t\|^2(t)+2^8\|(1+x)u\|^2(t)\Big(\|u_0\|^2+\|(1+x)^{1/2}\nabla u\|^2(t)\Big)\|u_t\|^2(t)\\
	&
	\leq\frac{1}{2}\|\nabla u_t\|^2(t)+2^8\|(1+x)u_0\|^2\Big[\|(1+x)u_0\|^2\\&+\|(1+x)\nabla u\|^2(t)\Big]\|(1+x)u_t\|^2(t)\\
	&
	\leq\frac{1}{2}\|\nabla u_t\|^2(t)+2^8\|(1+x)u_0\|^2\Big[\|3(1+x)u_0\|^2\\&+2\|(1+x) u_t\|^2(t)\Big]\|(1+x)u_t\|^2(t).
\end{align*}
Substituting $I_1, I_2$ into (3.14), making use of Lemma 2.2 and taking into account inequalities of (3.2), we come to the inequality
\begin{align}\label{3.14}
&\frac{d}{dt}
((1+x)^2,u_t^2)(t) +\int_{0}^Bu_{xt}^2(0,y,t)dy+4\|(1+x)^{1/2}u_{xt}\|^2(t)\notag\\&	+\frac{1}{2}\|(1+x)^{1/2}\nabla u_{t}\|^2(t)
 + \Big[\frac{\pi^2}{2B^2}-K(t)\Big]((1+x),u^2_t)(t)\leq 0,
\end{align}

where

\begin{align*}
	&K(t)\equiv 2^8\|(1+x)u_0\|^2\Big(\|3(1+x)u_0\|^2+2((1+x)^2,u^2_t)(t)\Big)\\&
		+2^9\|(1+x)u_0\|\Big(5\|(1+x)u_0\|^3+4\|(1+x)u_t\|^3(t)\Big)\Big[1\\&
		+2^8\|(1+x)u_0\|\Big(5\|(1+x)u_0\|^3+4\|(1+x)u_t\|^3(t)\Big)\Big]
\end{align*}

Since, by the conditions of Theorem 3.1, 
$$\frac{\pi^2}{2B^2} > K(0),$$
then, using (3.2), Lemma 2.2 and standard arguments,\;\cite{larluc2,larpad}, we obtain that

$$\frac{\pi^2}{2B^2} > K(t), \;t>0.$$

Returning to (3.15) and (3.14), we obrain

\begin{align}\label{3.14}
	&
	((1+x)^2,u_t^2)(t) +\int_0^t\{\int_{0}^Bu_{x\tau}^2(0,y,\tau)dy\notag\\&	+\frac{1}{2}\|(1+x)^{1/2}\nabla u_{\tau}\|^2(\tau)\}d\tau\leq ((1+x)^2,u_t^2)(0),
\end{align}
\begin{align}
	&2\|(1+x)^{1/2}u_x\|^2(t)
	+\|(1+x)^{1/2}\nabla u\|^2(t)\notag\\& \leq 8\|(1+x)u_t\|(t)\|(1+x)u\|(t)\notag\\&\leq 8\|(1+x)u_t\|(0)\|(1+x)u\|(0),
\end{align}
where

$$((1+x^2,u^2_t)(0)=((1+x),^2\{u_{0xxx}+u_{0xyy}+u^2_0u_{0x}\}^2).$$

{\bf Estimate V}\\

 Multiplying $j$-th equation of (3.5) by $\lambda_j$, and summing up the results over $j=1,...,N$, dropping the index $N$, we transfom the inner product
$$
-2\left((1+x)\partial^2_yu^N_,Lu^N\right)(t)=0
$$
into the inequality
\begin{align}\label{3.7}
&3\|u_{xy}\|^2(t)+\|u_{yy}\|^2(t)+\int_{0}^{B}u_{xy}^2(0,y,t)\,dy
+\notag\\
&=\frac23\left((1+x)(u^3)_x,u_{yy}\right)(t)+2((1+x)u_{yy}, u_t)(t)\notag\\
&=-\frac23\left((1+x)(u^3)_{yx},u_{y}\right)(t)+2((1+x)u_{yy}, u_t)(t)\notag\\
&\leq \delta\|\nabla u_{y}\|^2(t)+\frac{1}{\delta}\|(1+x)u_t\|^2(t)\notag\\
&+2(u^2,u^2_y)(t)+2(1+x)u^2u_y,u_{xy})(t).
\end{align}
Making use of Lemmas 2.1, 2.2,  we estimate
\begin{align*}
I_1&\equiv 2(u^2,u^2_y)(t)\leq2\|u\|^2_{L^4(D)}(t)\|u_y\|^2_{L^4(D)}(t)\notag\\
&\leq 4\|u\|(t)\|\nabla u\|(t)C^2_D\|u_y\|(t)\|\nabla u_y\|(t)\notag\\
&\leq \delta \|\nabla u_y\|^2(t)+\frac{4C^4_D}{\delta}\|u\|^2(t)\|\nabla  u\|^4(t),
\end{align*}
where $\delta$ is an arbitrary positive number,
\begin{align*}
I_2&\equiv 2(1+x)u^2u_y,u_{xy})(t)\leq 2\|u_{xy}\|(t)\|(1+x)u^2\|_{L^4(D)}(t)\|u_y\|_{L^4(D)}\\
&\leq 2^{3/2}C_D^{1/2}\|u_{xy}\|(t)\|u_y\|^{1/2}(t)\|\nabla u_y\|^{1/2}(t)\|(1+x)^{1/2}u\|^2_{L^8(D)}\\&
\leq\frac{1}{2}\|u_{xy}\|^2(t)+2^5C^2_D\|\nabla u_y\|(t)\|u_y\|(t)\|(1+x)^{1/2}u\|(t)\|(1+x)^{1/2}\nabla u\\&+\frac{u}{2(1+x)^{1/2}}\|^3(t)
\leq \frac{1}{2}\|u_{xy}\|^2(t)+\delta\|\nabla u_{y}\|^2(t)\\&
+\frac{2^{11}C^4_D}{\delta}\|u_y\|^2(t)\|(1+x)^{1/2}u\|^2(t)\Big(\|u_0\|^6+\|(1+x)^{1/2}\nabla u\|^6(t)\Big)\\&
\leq\frac{1}{2}\|u_{xy}\|^2(t)+\delta\|\nabla u_{y}\|^2(t)
+\frac{2^{11}C^4_D}{\delta}\frac{B^2}{\pi^2}\|(1+x)u_y\|^4(t)\Big(\|u_0\|^6\\&+2^5\|(1+x)u_t\|^3(0)\|(1+x)u\|^3(0)\Big).
\end{align*}
Substituting $I_1 ,I_2$ into (3.18), taking $\delta=\frac{1}{8},$ and making use of (3.12), (3.14),  we transform it into the following inequality:
\begin{align}
&\frac{7}{4}\| u_{xy}\|^2(t)+\frac12\|\nabla u_y\|^2(t)+\int_{0}^{B}u_{xy}^2(0,y,t)\,dy
\leq C_1\|(1+x)u_t\|^2(t)\notag\\&\leq C_1\|(1+x)u_t\|^2(0).   
\end{align}
where $C_1$ depends on $\|u_0\|, \|(1+x)u_t\|(0).$ Returning to Proposition 2.1 and making use of (3.9), (3.16), (3.17,  (3.19), we rewrite (2.7) as follows:

	\begin{align}
	&\sup_{(x,y)\in{D}}u^2(x,y,t)\le 2\Big[ \|u\|^2_{H^1({D})}(t)+\|u_{xy}\|^2_{L^2({D})}(t)\Big]\notag\\&\leq C(\|u_0\|, \|(1+x)u_t\|(0))\equiv C^2_s.
\end{align}

Obviously,
\begin{equation}\lim_{\|u_0\|_{H^2(D)}\to 0}C_s=0.
\end{equation}

{\bf Estimate VI}\\

Consider the following equation:
$$ -2((1+x)u^N_{xx},Lu^N)(t)=0$$
that, dropping the index $N$, can be rewritten in the form
\begin{align}& \|u_{xx}\|^2(t)+\int_0^B u^2_{xx}(0,y,t)dy-\|u_{xy}\|^2(t)-\int_0^B u^2_{xy}(0,y,t)dy\notag\\&
=2((1+x)u_t,u_{xx})(t)+2((1+x)u^2u_x,u_{xx})(t).
\end{align}

We estimate
\begin{align*} &I_1=2((1+x)u^2u_x,u_{xx})(t)\leq \frac14 \|u_{xx}\|^2(t)+4ess \sup_D u^2(x,y,t)\|u_x\|^2(t),\\&
	I_2=2((1+x)u_t,u_{xx})(t)\leq \frac14 \|u_{xx}\|^2(t)+4\|(1+x)u_t\|^2(t).
\end{align*}

Substituting $I_1, I_2$ into (3.22) and using (3.20), (3.21), we get

\begin{align}
&\|u^N_{xx}\|(t)+\int_0^B u^{N2}_{xx}(0,y,t)dy\leq C \|(1+x)u^N_t\|^2(t)\notag\\&\leq C\|(1+x)u_t\|^2(0).
\end{align}

This ineqaulity, (3.9) and (3.19) imply that
\begin{equation} 
\|u^N\|_{H^2(D)} \in L^{\infty}(\R^+;L^2(D))
\end{equation}
 uniformly in $N.$

Taking into account (3.3),(3.4), write (3.5) in the form
$$\int_0^B\bigl(u^N_{xxx}-\lambda_j u^N_x\bigr)w_j dy=-\int_0^B\bigl[u^N_t +|u^N|^2u^N_x\bigr]w_jdy.$$

Multiplying it by $g_{jxxx}-\lambda_j g_{jx}$, summing over $j=1,...,N$ and integrating with respect to $x$ over $(0,L),$ we obtain
$$\|\Delta u^N_x\|^2(t)\leq \|u^N_t\|(t)\|\Delta u^N_x\|(t)+\||u^N|^2u^N_x\|(t)\|\Delta u^N_x\|(t)$$
or
$$\|\Delta u^N_x\|(t)\leq \|u^N_t\|(t))+\||u^N|^2u^N_x\|(t).$$
Making use of  (3.16), (3.19), (3.20),  we get
\begin{equation}
\|\Delta u^N_x\|(t)\leq C,
\end{equation}
where the constant $C$ does not depend on $N,t>0.$

\subsection{Passage to the limit as $N\to \infty$}\label{limit}

Since the constants in (3.16),  (3.24), (3.25) do not depend on
$N,t>0,$ then, making use of the standard arguments, see \cite{temam2}, one may pass to the limit as $N\to \infty$ in (3.5) to obtain for all $\psi(x,y)\in L^2(D):$
\begin{equation}\label{3.22}
\int_{\mathcal{D}}\left[u_t+u^2u_x+\Delta u_x\right]\psi\,dx\,dy=0
\end{equation}
and establish the following result:

\begin{lemma}\label{lema1}
Let all the conditions of Theorem \ref{theorem1} hold. Then there exists a strong solution $u(x,y,t)$ to \eqref{2.1}-\eqref{2.4} such that
\begin{align}\label{3.23}
&\| u\|_{H^2(D)}^2(t)+\|u_t\|^2(t)+\|\Delta u_x\|^2(t)+\int_0^t\|\Delta u_x\|^2(\tau)d\tau\notag\\&+\|u_{x}(0,y,t)\|^2_{H^1_0(0,B)}\leq C, \;\;t>0.
\end{align}

\end{lemma}

This  completes the existence part of Theorem 3.1.

{\bf Uniqueness}

\begin{lemma} The regular solution from Lemma 3.1 is uniquelly defined.
	\end{lemma}

\begin{proof}
Let $u_1$ and $u_2$ be two distinct solutions to
\eqref{2.1}-\eqref{2.4}. Then $z=u_1-u_2$ solves the
following IBVP:
\begin{align}
Az&\equiv z_t++z_x+\frac12(u_1^3-u_2^3)_x+\Delta z_x=0\ \ \text{in}\ {Q},\;\;t>0,\label{5.1}\\
&z(0,y,t)=z(x,0,t)=z(x,B,t)\notag\\&=z(x,y,0)=0,\ \ (x,y)\in{D}.\label{5.3}
\end{align}
From the scalar product $$2\left(Az,(1+x)z\right)(t)=0,$$ we infer
\begin{align}
\frac{d}{dt}&((1+x),z^2)(t)(t)+3\|z_x\|^2(t)+\|z_y\|^2(t)+\int_{0}^Bz_x^2(0,y,t)\,dy\notag\\
&=-2((1+x)(u^3_1-u^3_2)_x,z)(t)=2((u_1^2+u_1u_2+u_2^2)z,z\notag\\&+(1+x)z_x)(t)\leq 2\|z_x\|^2(t)+C(M)((1+x),z^2)(t),
\end{align}

where $M=\sup_D|u_1^2+u_1u_2+u_2^2|(x,y,t)$. Due to Proposition 2.1 and (3.27), $M$ does not depend on $t>0.$ Hence (3.30) becomes
$$\frac{d}{dt}((1+x),z^2)(t)(t)\leq C(M)((1+x),z^2)(t).$$
Since $z(x,y,0)\equiv 0,$ by the Gronwall lemma,$$\|z\|^2(t)\leq ((1+x),z^2)(t)\equiv 0,\;\;t>0.$$
\end{proof}
The proofs of Lemma 3.2  and Theorem 3.1 are completed.\\
\end{proof}
\section{Decay of Regular Solutions}

\begin{thm}
	Let all the conditions of Theorem 3.1 be fulfilled and there exist $k>0$ and $C_s$  such that
	
	\begin{align} &C^2_s\leq min(\frac{k\pi^2}{4B^2},2k);\;\;k\leq \frac{\pi}{(20)^{1/2}B},\end{align}
	
	\begin{align}&\int_De^{kx}\Big[u^2_0(x,y)+|\nabla u_0(x,y)|^2+|\Delta u_{0x}(x,y)|^2\notag\\&+ u^4_0(x,y)u^2_{0x}(x,y)\Big]dxdy<\infty.
	\end{align}
Then the solution of Theorem 3.1 satisfies the following inequalities:		

\begin{align}
	&(e^{kx},u^2_t)(t)+\frac{k}{2}\int_0^t(e^{kx},|\nabla u_{\tau }|^2)(\tau)d\tau\leq (e^{kx},u^2_t)(0),
\end{align}

\begin{align}& \|u\|^2(t)_{H^2(D)}
	\leq  Ce^{(-\frac{k\pi^2}{2B^2}t)}.
\end{align}
\end{thm}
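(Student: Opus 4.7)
The plan is to redo the weighted energy estimates of Section \ref{existence} with the polynomial weights $(1+x)$ and $(1+x)^2$ replaced by the exponential weight $e^{kx}$. This substitution is profitable because $\partial_x e^{kx}=ke^{kx}$, so integration by parts of the dispersive terms against $e^{kx}u$ returns a positive multiple of $(e^{kx},u_x^2)$ and $(e^{kx},u_y^2)$ of the full weight. Combined with the Steklov inequality of Lemma \ref{steklov}, which yields $(e^{kx},u_y^2)\ge (\pi^2/B^2)(e^{kx},u^2)$, this produces a spectral gap $k\pi^2/B^2$ out of which the decay rate $k\pi^2/(2B^2)$ in (4.4) will emerge once the small bulk corrections are absorbed using (4.1).

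First I would derive the base weighted $L^2$ identity by computing $2(e^{kx}u,Lu)(t)=0$. After the usual integrations by parts and using $u|_\gamma=0$, the result reads schematically
\begin{align*}
\frac{d}{dt}(e^{kx},u^2)(t) &+ \int_0^B u_x^2(0,y,t)\,dy + 3k(e^{kx},u_x^2)(t) + k(e^{kx},u_y^2)(t)\\
&= k^3(e^{kx},u^2)(t) + \tfrac{k}{2}(e^{kx},u^4)(t).
\end{align*}
Proposition \ref{prop1} controls $(e^{kx},u^4)\le C_s^2(e^{kx},u^2)$, while Lemma \ref{steklov} converts the $u_y^2$ term into the lower bound $(k\pi^2/B^2)(e^{kx},u^2)$. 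The conditions $k^2\le \pi^2/(20B^2)$ and $C_s^2\le k\pi^2/(4B^2)$ in (4.1) leave a coefficient of at least $k\pi^2/(2B^2)$ in front of $(e^{kx},u^2)$, yielding $\tfrac{d}{dt}(e^{kx},u^2)+\tfrac{k\pi^2}{2B^2}(e^{kx},u^2)\le 0$, and Gronwall produces exponential decay of $(e^{kx},u^2)(t)$.

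For the bound (4.3) I would mirror Estimate IV by differentiating the equation in $t$ and testing with $e^{kx}u_t$. This yields an analogue of (3.15) with dissipation $\tfrac{k}{2}(e^{kx},|\nabla u_t|^2)$ and two nonlinear terms proportional to $(e^{kx}u^2u_t,u_{xt})$ and $(e^{kx}uu_t^2,u_x)$; each is estimated by H\"older and Lemma \ref{lemma1} with $\sup u^2\le C_s^2$ from Proposition \ref{prop1} keeping constants small, and the Steklov gap absorbs the remaining quadratic contribution under (4.1). Integrating in $\tau\in(0,t)$ then gives (4.3). To upgrade to the full $H^2$ decay (4.4), I would perform weighted analogues of Estimates V and VI (testing with $-e^{kx}u_{yy}$ and $-e^{kx}u_{xx}$) so as to control $\|u_{xy}\|$, $\|u_{yy}\|$ and $\|u_{xx}\|$ by $(e^{kx},u_t^2)(t)$ plus lower--order weighted norms; the same spectral--gap argument applied to the $u_t$ identity drives $(e^{kx},u_t^2)$ to exponential decay as well, so (4.4) follows. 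The main obstacle is closing the smallness bootstrap in the new weighted framework: the two conditions $C_s^2\le k\pi^2/(4B^2)$ and $C_s^2\le 2k$ in (4.1) are tuned to control the quartic and the cubic--product nonlinearities respectively, while $k\le \pi/(\sqrt{20}\,B)$ is needed to absorb the $k^3(e^{kx},u^2)$ correction that integration by parts of the dispersive terms introduces.
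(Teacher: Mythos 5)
Your proposal follows essentially the same route as the paper's proof: the weighted identities $2(Lu,e^{kx}u)(t)=0$ and $2((Lu)_t,e^{kx}u_t)(t)=0$, the Steklov gap of Lemma 2.2, and the bound $\sup_D u^2\le C_s^2$ from Proposition 2.1, with the smallness conditions (4.1) absorbing the $k^3$ and nonlinear corrections exactly as in Estimates 4.1--4.2. The only (minor) divergence is at the final $H^2$ step, where the paper recycles the polynomial-weight bounds (3.17), (3.20), (3.24) and compares $(1+x)$ with $e^{kx/2}$ via the two cases $k\ge 2$ and $k<2$, whereas you propose redoing Estimates V--VI directly with the exponential weight; both close the argument in the same way.
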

\begin{proof}
	
	{\bf Estimate 4.1}
For $k>0$, making use of (3.20), consider the following equation:
\begin{align}
	&2(Lu,e^{kx}u)(t)=\frac{d}{dt}(e^{kx},u^2)(t)+3k(e^{kx},u_x^2)(t)+k(e^{kx},u^2_y)(t)\notag\\&
+k\int_0^Bu^2_x(0,y,t)dy-k^3(e^{kx},u^2)(t)=\frac{k}{2}(e^{kx},u^4)(t)\notag\\&\leq\frac{k}{2}\sup_Du^2(x,y,t)(e^{kx},u^2)(t)\leq\frac{k}{2}C^2_s(e^{kx},u^2)(t).
\end{align}

Exploiting Lemma 2.2, transform (4.5) into the folowing inequality:

\begin{align}
	&\frac{d}{dt}(e^{kx},u^2)(t)+3k(e^{kx},u_x^2)(t)+\frac{k\pi^2}{2B^2}(e^{kx},u^2)(t)\notag\\&+\Big[\frac{k\pi^2}{2B^2}-k^3-\frac{k}{2}C^2_s\Big](e^{kx},u^2)(t)\leq 0.
	\end{align}

By conditions of Theorem 4.1, 
$$\Big[\frac{k\pi^2}{2B^2}-k^3-\frac{k}{2}C^2_s\Big]\geq 0$$

that implies

\begin{align*}
	&\frac{d}{dt}(e^{kx},u^2)(t)+\frac{k\pi^2}{2B^2}(e^{kx},u^2)(t)\leq 0
\end{align*}

and consequently,

\begin{align}(e^{kx},u^2)(t)\leq e^{(-\frac{k\pi^2}{2B^2}t)}(e^{kx},u^2)(0), \;\;t>0.
	\end{align}

{\bf Estimate 4.2}

\begin{align}
	&2((Lu)_t,e^{kx}u_t)(t)=\frac{d}{dt}(e^{kx},u^2_t)(t)+3k(e^{kx},u_{tx}^2)(t)+k(e^{kx},u^2_{ty})(t)\notag\\&
	+k\int_0^Bu^2_{tx}(0,y,t)dy-k^3(e^{kx},u^2_t)(t)=-2k(e^{kx}u^2,u^2_t)(t)\notag\\&-2(e^{kx}u^2u_t,u_{xt})(t)\leq
	2k\sup _D u^2(x,y,t)(e^{kx},u^2_t)(t)\notag\\&+2\sup _D u^2(x,y,t)(e^{kx}u_t,u_{xt})(t)\leq 2kC^2_s (e^{kx},u^2_t)(t)\notag\\&+2C^2_s\|(e^{kx/2}u_t\|(t)\|(e^{kx/2}u_{xt}\|(t).
	\end{align}

Making use of Lemma 2.2, we rewrite (4.8) in the form

\begin{align}
	&\frac{d}{dt}(e^{kx},u^2_t)(t)+(3k-C^2_s)(e^{kx},u_{tx}^2)(t)+\frac{k}{2}(e^{kx},u^2_{ty})(t)\notag\\&
	+\Big[\frac{k\pi^2}{2B^2}-k^3-2kC^2_s-C^2_s\Big]^3(e^{kx},u^2_t)(t)\leq 0.
\end{align}

There exist $k>0$ and $C_s$ such that the conditions of Theorem 4.1 imply

$$\frac{k\pi^2}{2B^2}-k^3-2kC^2_s-C^2_s\geq 0$$
and (4.9) becomes

\begin{align}
	&\frac{d}{dt}(e^{kx},u^2_t)(t)+k(e^{kx},u_{tx}^2)(t)+\frac{k}{2}(e^{kx},u^2_{ty})(t)\leq 0.
	\end{align}

Applying Lemma 2.2, we reduce (4.10) to the inequality

\begin{align*}
\frac{d}{dt}(e^{kx},u^2_t)(t)+\frac{k\pi^2}{2B^2}(e^{kx},u^2_{t})(t)\leq 0.
\end{align*}

Consequently,

\begin{equation}\|u_t\|^2(t)\leq (e^{kx},u^2_t)(t)\leq e^{(-\frac{k\pi^2}{2B^2}t)}(e^{kx},u^2_t)(0), \;\;t>0
	\end{equation}

and

\begin{align}
&(e^{kx},u^2_t)(t)+\frac{k}{2}\int_0^t(e^{kx},|\nabla u_{\tau }|^2)(\tau)d\tau\leq (e^{kx},u^2_t)(0).
\end{align}

{\bf Estimate 4.3}

Combining estimates (3.17), (3.20), (3.24), (4.11), we find that
\begin{equation} \|u\|^2(t)_{H^2(D)}\leq C\Big( \|(1+x)u_t\|^2(t)+\|(1+x)u\|^2(t)\Big).
\end{equation}

Consider 2 cases:\\{\bf case 1} $(k\geq 2)$. This implies that $1+x\leq e^{\frac{kx}{2}},$ hence by (4.7), (4.11),
\begin{align}
\|u\|^2_{H^2(D)}(t)\leq	C\Big[\|e^{\frac{kx}{2}}u_t\|^2(t)+\|e^{\frac{kx}{2}}u\|^2(t)\Big] \leq Ce^{(-\frac{k\pi^2}{2B^2}t)}.
\end{align}

{\bf case 2} $(k< 2)$. In this case, there is an interval $x\in [0,x_1],$ where $1+x\geq e^{\frac{kx}{2}}$ and there exists a function $\phi(x)\in (a_0,1]$ such that $ a_0>0$ and $\phi(x)(1+x)=e^{\frac{kx}{2}}.$ This allow us to rewrite (4.13) as

\begin{align}& \|u\|^2(t)_{H^2(D)}\leq C\Big( \|(1+x)u_t\|^2(t)+\|(1+x)u\|^2(t)\Big)\notag\\&
	=C\Big[((1+x)^2,u^2_t)(t)+((1+x)^2,u^2)(t)\Big]\notag\\&
	\leq C\int_0^B\Big[\int_0^{x_1}\phi^{-2}(x)e^{kx}[ u^2_t(x,y,t)+ u^2(x,y,t)]dx\notag\\&+\int_{x_1}^{\infty}e^{kx}[u^2_t(x,y,t)+u^2(x,y,t)]dx\Big]dy\notag\\&
	\leq C\frac{1}{a^2_0} \Big(\|e^{kx/2}u_t\|^2(t)+\|e^{kx/2}u\|^2(t)\Big)
	\leq  Ce^{(-\frac{k\pi^2}{2B^2}t)}.
\end{align}
This and (4.12) complete the proof of Theorem 4.1.

\end{proof}

{\bf Conclusions.} An initial-boundary value problem for the 2D critical  Zakharov-Kuznetsov equation posed on  a half-strip has been considered. Assuming small initial data, the existence of a global regular solution, uniqueness and exponential decay of  $\|u\|(t)_{H^2(D)}$ have been established.

{\bf Acknowledgements.}

{\bf Data Availability Statement.}
This research does not contain any data necessary to confirm. 

\medskip

\end{document}